\newcommand{\w}{\omega}
\newcommand{\K}{\mathcal K}
\newcommand{\II}{\mathbb I}
\newcommand{\IR}{\mathbb R}
\newcommand{\Z}{\mathcal Z}
\newcommand{\id}{\mathrm{id}}
\newcommand{\U}{\mathcal U}
\newcommand{\F}{\mathcal F}
\newcommand{\C}{\mathcal C}
\newcommand{\W}{\mathcal W}
\newcommand{\St}{\mathcal St}
\newcommand{\rank}{\mathrm{rank}}
\newcommand{\diam}{\mathrm{diam}}
\newcommand{\V}{\mathcal V}
\newcommand{\ulim}{\textstyle{\bigcup^\infty}}
\newcommand{\Ra}{\Rightarrow}
\newcommand{\pr}{\mathrm{pr}}
\newcommand{\mesh}{\mathrm{mesh}}
\newtheorem{theorem}{Theorem}
\newtheorem{conjecture}{Conjecture}
\newtheorem{corollary}{Corollary}
\newtheorem{proposition}{Proposition}
\newtheorem{problem}{Problem}
\title{Universal meager $F_\sigma$-sets in locally compact manifolds}
\author{Taras Banakh and Du\v san Repov\v s}
\address{T.Banakh: Jan Kochanowski University in Kielce (Poland) and Ivan Franko National University of Lviv (Ukraine)}
\email{t.o.banakh@gmail.com}
\address{D.Repov\v s: Faculty of Mathematics and Physics, and
Faculty of Education, University of Ljubljana (Slovenia)}
\email{dusan.repovs@guest.arnes.si}
\keywords{Universal nowhere dense subset, Sierpi\'nski carpet, Menger cube, Hilbert cube manifold,  $n$-manifold, tame ball, tame decomposition}
\subjclass[2010]{57N20, 57N45, 54F65}
\thanks{This research was supported by the Slovenian Research Agency grants P1-0292-0101 and J1-4144-0101. The first author has been partially financed by NCN means granted by decision DEC-2011/01/B/ST1/01439.}
\begin{document}
\begin{abstract} In each manifold $M$ modeled on a finite or infinite dimensional cube $[0,1]^n$, $n\le \w$, we construct a meager $F_\sigma$-subset $X\subset M$ which is universal meager in the sense that for each meager subset $A\subset M$ there is a homeomorphism $h:M\to M$ such that $h(A)\subset X$. We also prove that any two universal meager $F_\sigma$-sets in $M$ are ambiently homeomorphic.
\end{abstract}
\maketitle

In this paper we shall construct and characterize universal meager $F_\sigma$-sets in $\II^n$-manifolds.

A meager subset $A$ of a topological space $X$ is called {\em universal meager} if for each meager subset $B\subset X$ there is a homeomorphism $h:X\to X$ such that $h(B)\subset A$. So, each universal meager subset of $X$ contains homeomorphic copies of all other meager subsets of $X$.

In fact, the notion of a universal meager set is a special case of a more general notion of a $\K$-universal set for some family $\K$ of subsets of a topological space $X$. Namely, we define a set $U\in\K$ to be {\em $\K$-universal} if for each set $K\in\K$ there is a homeomorphism $h:X\to X$ such that $h(K)\subset U$.

$\K$-Universal sets for various classes $\K$ often appear in topology. A classical example of such set is the Sierpi\'nski Carpet $M^2_1$, known to be a $\K$-universal set for the family $\K$ of all (closed) nowhere dense subsets of the square $\II^2=[0,1]^2$ (see \cite{S16}). The Sierpi\'nski Carpet $M^2_1$ is one of the Menger cubes $M^n_k$, which are $\K$-universal for the family $\K$ of all $k$-dimensional compact subsets of the $n$-dimensional cube $\II^n$ (see \cite{Stanko}, \cite[\S4.1]{Chi}). An analogue of the Sierpi\'nski Carpet exists also in the Hilbert cube $\II^\w$, which contains a $\Z_0$-universal set for the family $\Z_0$ of closed nowhere dense subsets of $\II^\w$ (see \cite{BR}).

Many $\K$-universal spaces arise in infinite-dimensional topology. For example, the pseudo-boundary $B(\II^\w)=[0,1]^\w\setminus (0,1)^\w$ of the Hilbert cube $\II^\w$ is known to be $\sigma\Z_\w$-universal for the family $\sigma\Z_\w$ of $\sigma Z_\w$-subsets of $\II^\w$. What is surprising, up to an ambient homeomorphism, $B(\II^\w)$ is a unique $\sigma\Z_\w$-universal set in $\II^\w$. In this paper we shall show that such a uniqueness theorem also holds for $\sigma\Z_0$-universal subsets in the Hilbert cube $\II^\w$.

Let us recall the definition of the families $\sigma\Z_\w$ and $\sigma\Z_0$. They consist of $\sigma Z_\w$-sets and $\sigma Z_0$-sets, respectively.

A closed subset $A$ of a topological space $X$ is called a {\em $Z_n$-set} in $X$ for a (finite or infinite) number $n\le \w$ if the set $\{f\in C(\II^n,X):f(\II^n)\cap A=\emptyset\}$ is dense in the space $C(\II^n,X)$ of all continuous functions $f:\II^n\to X$, endowed with the compact-open topology. Here by $\II=[0,1]$ we denote the unit interval and by $\II^n$ the $n$-dimensional cube. For $n=\w$ the space $\II^n=\II^\w$ is the Hilbert cube.

A subset $A\subset X$ is called a {\em $\sigma Z_n$-set} in $X$ if $A$ can be written as the  union $A=\bigcup_{k\in\w}A_k$ of countably many $Z_n$-sets $A_k\subset X$. Let us observe that a subset $A\subset X$ is a $Z_0$-set in $X$ if and only if it is closed and nowhere dense in $X$, and $A$ is a $\sigma Z_0$-set if and only if $A$ is a meager $F_\sigma$-set in $X$.

For a topological space $X$ by $\Z_n$ and $\sigma\Z_n$ we denote the families of $Z_n$-sets and $\sigma Z_n$-sets in  $X$, respectively.

A characterization of $\Z_\w$-universal sets in the Hilbert cube is quite simple and can be easily derived from the $Z$-Set Unknotting Theorem 11.1 from \cite{Chap}:

\begin{proposition} A subset $A\subset\II^\w$ is $\Z_\w$-universal in $\II^\w$ if and only if $A$ is a $Z_\w$-set in $\II^\w$, containing a topological copy of the Hilbert cube $\II^\w$.
\end{proposition}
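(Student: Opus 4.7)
The plan is to prove the two directions separately, using the Hilbert cube $\II^\w$'s nice embedding properties and Chapman's $Z$-Set Unknotting Theorem.

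For the easy direction ($\Rightarrow$), assume $A$ is $\Z_\w$-universal. By definition $A\in\Z_\w$, so the first condition holds. To see that $A$ contains a copy of $\II^\w$, I would exhibit a concrete $Z_\w$-set in $\II^\w$ that is itself homeomorphic to the Hilbert cube: write $\II^\w=\II^\w\times\II$ and take $B=\II^\w\times\{0\}$, which is a $Z_\w$-set homeomorphic to $\II^\w$. By $\Z_\w$-universality of $A$ there is a homeomorphism $h:\II^\w\to\II^\w$ with $h(B)\subset A$, so $h(B)\subset A$ is the desired topological copy of the Hilbert cube inside $A$.

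For the nontrivial direction ($\Leftarrow$), let $A$ be a $Z_\w$-set in $\II^\w$ containing a topological copy $C\subset A$ of $\II^\w$, and let $B\subset\II^\w$ be an arbitrary $Z_\w$-set. I want to produce a homeomorphism $h:\II^\w\to\II^\w$ with $h(B)\subset A$. Since $B$ is a compact metric space and $C$ is homeomorphic to the universal compact metric space $\II^\w$, there is an embedding $e:B\hookrightarrow C$. Define $B':=e(B)\subset C\subset A$.

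The crucial point is that both $B$ and $B'$ are $Z_\w$-sets in $\II^\w$. For $B$ this is our hypothesis; for $B'$, note that $B'$ is a closed subset of the $Z_\w$-set $A$, and any closed subset of a $Z_\w$-set in $\II^\w$ is again a $Z_\w$-set (if maps $\II^\w\to\II^\w$ can be approximated by maps missing $A$, they can also be approximated by maps missing $B'\subset A$). The map $e:B\to B'$ is a homeomorphism between two $Z_\w$-sets in the Hilbert cube, so by the $Z$-Set Unknotting Theorem 11.1 of \cite{Chap} it extends to a homeomorphism $h:\II^\w\to\II^\w$. Then $h(B)=e(B)=B'\subset A$, which witnesses the $\Z_\w$-universality of $A$.

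The main content of the argument is the appeal to Chapman's unknotting theorem; the only non-formal step in setting it up is the embedding $B\hookrightarrow C\cong\II^\w$, which is available because every compact metric space embeds into the Hilbert cube. No genuine obstacle arises here, which is why the proposition is advertised as an easy consequence of Chapman's theorem.
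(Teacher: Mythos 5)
Your proof is correct and follows exactly the route the paper indicates: the paper gives no written proof, only the remark that the proposition ``can be easily derived from the $Z$-Set Unknotting Theorem 11.1'' of Chapman, and your argument (embed the given $Z_\w$-set into the copy of $\II^\w$ inside $A$, note that a closed subset of a $Z_\w$-set is a $Z_\w$-set, and apply unknotting) is precisely that derivation. The forward direction via the endface $\II^\w\times\{0\}$ is also the standard witness, so nothing further is needed.
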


A characterization of $\sigma\Z_\w$-universal sets in the Hilbert cube also is well-known and can be given in many different terms (skeletoid of Bessaga-Pelczynski \cite{BP}, capsets of Anderson \cite{An}, \cite{Chap71}, absorptive sets of West \cite{West}, pseudoboundaries of Geoghegan and Summerhill \cite{GS72}, \cite{GS74}). For our purposes the most appropriate approach is that of West \cite{West} and Geoghegan and Summerhill \cite{GS74}. To formulate this approach, we need to recall some notation.

Let $\U$, $\V$ be two families of sets of a topological space $X$.
Put
$$
\begin{aligned}
\U\wedge \V&=\{U\cap V:U\in\U,\;V\in\V,\;U\cap V\ne\emptyset\}\mbox{ and }\\
\U\vee \V&=\{U\cup V:U\in\U,\;V\in\V,\;U\cap V\ne\emptyset\}.
\end{aligned}
$$
We shall write $\U\prec\V$ and say that $\U$ {\em refines} $\V$ if each set $U\in\U$ is contained in some set $V\in\V$. Let $\St(\U,\V)=\{\St(U,\V):U\in\U\}$ where $\St(U,\V)=\bigcup\{V\in\V:U\cap V\ne\emptyset\}$. Put $\St(\U)=\St(\U,\U)$ and $\St^{n+1}(\U)=\St(\St^n(\U))$ for each $n>0$. We shall say that two maps $f,g:Z\to X$ are {\em $\U$-near} and denote it by $(f,g)\prec\U$ if the family $(f,g)=\big\{\{f(z),g(z)\}:z\in Z\big\}$ refines the family $\U\cup\big\{\{x\}:x\in X\big\}$.
For a family $\F$ of subsets of a metric space $(X,d)$ we put $\mesh(\F)=\sup_{F\in\F}\diam(F)$.

Let $\K$ be a family of closed subsets of a Polish space $X$ and $\sigma\K=\{\bigcup_{n\in\w}A_n:A_n\in\K,\;\;n\in\w\}$. We shall say that $\K$ is {\em topologically invariant} if $\K=\{h(K):K\in\K\}$ for each homeomorphism $h:X\to X$.

A subset $B\subset X$ is called {\em $\K$-absorptive} in $X$ if $B\in\sigma\K$ and for each set $K\subset\K$,  open set $V\subset X$, and  open cover $\U$ of $V$ there is a homeomorphism $h:V\to V$ such that $h(K\cap V)\subset B\cap V$ and $(h,\id)\prec\U$. An important observation is that each set $A\in\sigma\K$ containing a $\K$-absorptive subset of $X$ is also $\K$-absorptive.

The following powerfull uniqueness theorem was proved by West \cite{West} and Geoghegan and Summerhill  \cite[2.5]{GS74}.

\begin{theorem}[Uniqueness Theorem for $\K$-absorptive sets]\label{t1} Let $\K$ be a topologically invariant family of closed subsets of a Polish space $X$. Then any two $\K$-absorptive sets $B,B'\subset X$ are ambiently homeomorphic. More precisely, for any open set $V\subset X$ and any open cover $\U$ of $V$ there is a homeomorphism $h:V\to V$ such that $h(V\cap B)=V\cap B'$ and $h$ is $\U$-near to the identity map of $V$.
\end{theorem}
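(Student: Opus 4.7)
My plan is to adapt the classical back-and-forth absorption argument, standard in infinite-dimensional topology. Since $V$ is open in the Polish space $X$, it is itself Polish, so fix a complete bounded metric $d$ on $V$. Using $B,B'\in\sigma\K$, write $B=\bigcup_{n\in\w}B_n$ and $B'=\bigcup_{n\in\w}B'_n$ as monotone unions with $B_n,B'_n\in\K$. Choose a nested sequence of open covers $\U=\U_0,\U_1,\U_2,\ldots$ of $V$ satisfying $\St(\U_{n+1})\prec\U_n$ and $\mesh(\U_n)<2^{-n}$ in $d$, so that telescoping $\U_n$-nearness across stages is ultimately controlled by $\U$.

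The main construction is a recursive family of homeomorphisms $h_n:V\to V$ with $h_0=\id$, maintaining the invariants (a)~$h_n(B_k\cap V)\subset B'$ for every $k\le\lceil n/2\rceil$; (b)~$h_n^{-1}(B'_k\cap V)\subset B$ for every $k\le\lfloor n/2\rfloor$; and (c)~$(h_n,h_{n-1})\prec\U_n$ together with $(h_n^{-1},h_{n-1}^{-1})\prec\U_n$. At an odd step $n=2m+1$, the set $K:=h_{n-1}(B_{m+1}\cap V)$ lies in $\K$ by topological invariance of $\K$, so the $\K$-absorptivity of $B'$ provides a $\U_n$-small homeomorphism $\phi:V\to V$ with $\phi(K)\subset B'\cap V$; setting $h_n:=\phi\circ h_{n-1}$ and invoking monotonicity of $(B_k)$ delivers (a) up through $k\le m+1$. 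An even step $n=2m+2$ is dual: apply $\K$-absorptivity of $B$ to the set $h_{n-1}^{-1}(B'_{m+1}\cap V)\in\K$ to produce a $\U_n$-small correction upgrading (b) to $k\le m+1$.

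Choosing the covers $\U_n$ finely enough --- also with respect to the pullback metrics under the $h_k^{\pm1}$ already constructed --- forces both sequences $(h_n)$ and $(h_n^{-1})$ to be Cauchy in the sup-metric on $C(V,V)$, so they converge to continuous maps inverse to one another, hence to a homeomorphism $h:V\to V$. The invariants (a) and (b) pass to the limit to give $h(V\cap B)=V\cap B'$, while the nesting $\St(\U_{n+1})\prec\U_n$ telescopes (c) into the required $(h,\id)\prec\U$. The main obstacle I anticipate is that the forward perturbation at stage $2m+1$ could in principle disturb the backward condition (b) already established at stage $2m$, since $\phi^{-1}(B'_k)$ need not remain inside $h_{2m}(B)$. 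Handling this requires a \emph{relative} form of $\K$-absorption, in which the perturbation is chosen with support in a small open neighborhood of the set being pushed but disjoint from a prescribed closed set carrying the previously placed data. Such a relative statement is available because each $K\in\K$ is closed and the covers $\U_n$ may be refined at will; it is exactly the bookkeeping underlying the West and Geoghegan--Summerhill proofs, and the topological invariance of $\K$ together with the $\sigma\K$-structure of $B$ and $B'$ supplies all the room needed to realize it.
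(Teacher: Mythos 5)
The paper itself does not prove this theorem --- it is quoted from West and from Geoghegan--Summerhill --- so your proposal must be measured against those sources. Your architecture (write $B=\bigcup_{n}B_n$ and $B'=\bigcup_n B'_n$, run a back-and-forth of small absorbing homeomorphisms, and pass to the limit of the compositions) is indeed the argument used there, and the convergence machinery you describe (nested covers with $\St(\U_{n+1})\prec\U_n$, control of both $h_n$ and $h_n^{-1}$) is standard and sound. But the induction and, more seriously, the limit step do not work as you have written them, and the reason is precisely the point you flag at the end and then assert away.

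The set $B'$ is a $\sigma\K$-set, not a closed set. So from your invariant (a), namely $h_n(B_k\cap V)\subset B'$ for all large $n$, you cannot conclude $h(B_k\cap V)\subset B'$ for the limit homeomorphism $h$: a convergent sequence of points of $B'$ may converge outside $B'$. The classical proofs therefore carry the much stronger invariant that the stage-$n$ perturbation is the \emph{identity} on the compacta already placed inside $B'$ (and, dually, on those already pulled back into $B$), so that the restriction of $h_m$ to $B_k\cap V$ stabilizes for large $m$ and the inclusion is inherited by $h$ literally rather than by a limiting process; the same stabilization is what protects condition (b) from being destroyed at the odd steps. This ``relative'' (rel-$L$) absorption is not a bookkeeping afterthought available ``because each $K\in\K$ is closed'': it does not follow formally from the paper's definition of $\K$-absorptive, which contains no rel clause, and deriving it from the plain absorption property is the actual technical content of the West and Geoghegan--Summerhill proofs. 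Two smaller gaps of the same kind: to invoke absorptivity at stage $n$ you need $h_{n-1}(B_{m+1}\cap V)$ to be of the form $K\cap V$ with $K\in\K$, but topological invariance of $\K$ refers to homeomorphisms of $X$ while your $h_{n-1}$ is only a self-homeomorphism of $V$; you must arrange (e.g.\ by making all covers refine $\{B(x,d(x,X\setminus V)/2):x\in V\}$, as the paper does in the proof of Theorem~\ref{t4}) that each $h_n$ extends by the identity to a homeomorphism of $X$. And taking the unions $\bigcup_n B_n$ monotone with $B_n\in\K$ tacitly assumes $\K$ is closed under finite unions. Until the relative absorption lemma is actually stated and proved, the proposal is an accurate outline of the known proof rather than a proof.
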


Two subsets $A,B$ of a topological space $X$ are called {\em ambiently homeomorphic} if there is a homeomorphism $h:X\to X$ such that $h(A)=B$. This happens if and only if the pairs $(X,A)$ and $(X,B)$ are homeomorphic. We shall say that two pairs $(X,A)$ and $(Y,B)$ of topological spaces $A\subset X$ and $B\subset Y$ are {\em homeomorphic} if there is a homeomorphism $h:X\to Y$ such that $h(A)=B$. In this case we say that $h:(X,A)\to (Y,B)$ is a homeomorphism of pairs.

According to the following corollary of Theorem~\ref{t1}, each $\K$-absorptive set is  $\sigma\K$-universal.

\begin{corollary}\label{c1} Let $\K$ be a topologically invariant family of closed subsets of a Polish space. If a $\K$-absorptive set $B$ in $X$ exists, then a subset $A\subset X$ is $\sigma\K$-universal in $X$ if and only if $A$ is $\K$-absorptive.
\end{corollary}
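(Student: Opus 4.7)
The plan is to prove both implications by combining Theorem~\ref{t1} with the observation, stated just before it, that any set in $\sigma\K$ containing a $\K$-absorptive subset is itself $\K$-absorptive. Existence of $B$ is needed only in the ``only if'' direction, where it supplies a concrete $\K$-absorptive target to transport into $A$.

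For the ``only if'' direction, suppose $A$ is $\sigma\K$-universal. By definition $A\in\sigma\K$, and since $B\in\sigma\K$, universality of $A$ yields a homeomorphism $h:X\to X$ with $h(B)\subset A$. I would first check that $h(B)$ is itself $\K$-absorptive; this is a routine conjugation argument using topological invariance of $\K$. Given $K\in\K$, an open $V\subset X$, and an open cover $\U$ of $V$, set $K'=h^{-1}(K)\in\K$, $V'=h^{-1}(V)$, and $\U'=\{h^{-1}(U):U\in\U\}$, an open cover of $V'$. Applying $\K$-absorptivity of $B$ produces a homeomorphism $g':V'\to V'$ with $g'(K'\cap V')\subset B\cap V'$ and $(g',\id)\prec\U'$; then $g=h\circ g'\circ h^{-1}:V\to V$ satisfies $g(K\cap V)\subset h(B)\cap V\subset A\cap V$ and $(g,\id)\prec\U$. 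Thus $h(B)$ is $\K$-absorptive, and since $A\in\sigma\K$ contains the $\K$-absorptive set $h(B)$, the observation forces $A$ to be $\K$-absorptive.

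For the ``if'' direction, suppose $A$ is $\K$-absorptive and let $C\in\sigma\K$ be arbitrary. The key move is to enlarge the target rather than work with $C$ directly: the union $A\cup C$ lies in $\sigma\K$ and contains the $\K$-absorptive set $A$, so the observation makes $A\cup C$ itself $\K$-absorptive. Applying Theorem~\ref{t1} to the two $\K$-absorptive sets $A$ and $A\cup C$, with $V=X$ and $\U=\{X\}$, yields a homeomorphism $g:X\to X$ with $g(A)=A\cup C$. Setting $h=g^{-1}$ gives $h(C)\subset h(A\cup C)=A$, witnessing $\sigma\K$-universality of $A$.

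The only step requiring care is the conjugation argument verifying that $\K$-absorptivity is preserved by homeomorphisms of $X$; the rest is bookkeeping, since Theorem~\ref{t1} and the stated observation perform all the genuine work. I do not expect any substantive obstacle.
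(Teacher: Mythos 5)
Your proposal is correct and follows essentially the same route as the paper: the ``if'' direction absorbs $C$ into $A\cup C$ and invokes Theorem~\ref{t1}, while the ``only if'' direction transports $B$ into $A$ and uses the observation that a $\sigma\K$-set containing a $\K$-absorptive set is $\K$-absorptive. The only difference is that you spell out the conjugation argument for the topological invariance of $\K$-absorptivity, which the paper leaves implicit.
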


\begin{proof} Assume that a subset $A$ of $X$ is $\K$-absorptive. The definition implies that $A\in\sigma \K$. To show that $A$ is $\sigma\K$-universal, fix any subset $K\in\sigma\K$. The definition of a $\K$-absorptive set implies that the union $A\cup K$ is $\K$-absorptive. By the Uniqueness Theorem~\ref{t1}, there is a homeomorphism of pairs $h:(X,A\cup K)\to (X,A)$. This homeomorphism embeds the set $K$ into $A$, witnessing that the $\K$-absorptive set $A$ is $\sigma\K$-universal.

Now assume that a set $A\subset X$ is $\sigma \K$-universal. Since the $\K$-absorptive set $B$ belongs to the family $\sigma\K$, there is a homeomorphism $h$ of $X$ such that $h(B)\subset A$. The topological invariance of the class $\K$ implies that the set $h(B)$ is $\K$-absorptive, and so is the set $A\supset h(B)$.
\end{proof}

Corollary~\ref{c1} reduces the problem of studying $\sigma\K$-universal sets in a Polish space $X$ to studying $\K$-absorptive sets in $X$ (under the assumption that a $\K$-absorptive set in $X$ exists). The problem of the existence of $\K$-absorptive sets was considered in several papers. In particular, Geoghegan and Summerhill \cite{GS74}  proved that each Euclidean space $\IR^n$ contains a $\Z_0$-absorptive set and such a set is unique up to ambient homeomorphism.

Unfortunately, the methods of constructing $\Z_0$-absorptive sets in Euclidean spaces used in \cite{GS74} does not work in case of the Hilbert cube or Hilbert cube manifolds (in spite of the fact that the paper \cite{GS74} was written  to demonstrate applications of methods of infinite-dimensional topology in the theory of finite-dimensional manifolds). Known results on $\Z_\w$-absorptive sets in the Hilbert cube $\II^\w$ and $\Z_0$-absorptive sets in Euclidean spaces allow us to make the following:

\begin{conjecture}\label{conj1} The Hilbert cube contains a $\Z_n$-absorptive set for every $n\le\w$.
\end{conjecture}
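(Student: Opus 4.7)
The plan is to treat three regimes separately. The case $n=\w$ is classical: the pseudo-boundary $B(\II^\w):=\II^\w\setminus(0,1)^\w$ is $\Z_\w$-absorptive in $\II^\w$, as established in the works of Anderson, West, and Geoghegan-Summerhill cited above. The case $n=0$ is essentially the main theorem of the present paper, specialized to $M=\II^\w$: the constructed universal meager $F_\sigma$-set is verified to be $\Z_0$-absorptive in the course of establishing its uniqueness via Theorem~\ref{t1}. Thus the only genuinely open range is $0<n<\w$.

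For intermediate $n$, I would adapt the tame-decomposition machinery of the present paper to the $Z_n$-setting. Fix a sequence of tame decompositions $\{C_{k,i}\}_{i\in\w}$ of $\II^\w$ into tame sub-Hilbert cubes with $\mesh\to 0$, and inside each piece $C_{k,i}$, identified with $\II^{n+1}\times\II^\w$, insert a local model $L_{k,i}:=A^{n+1}_n\times\II^\w$, where $A^{n+1}_n\subset\II^{n+1}$ is a finite-dimensional $\Z_n$-absorptive set of Geoghegan-Summerhill type (for instance, the Menger-cube-based construction in $\II^{n+1}$). A short dimension-count in the $\II^{n+1}$-factor shows that $L_{k,i}$ is a $Z_n$-set in $\II^\w$, so the candidate
$$B_n:=\bigcup_{k,i}L_{k,i}$$
lies in $\sigma\Z_n$. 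The $\Z_n$-absorption property of $B_n$ would then be verified by a patching argument analogous to the one in this paper: given a $Z_n$-set $K$, an open set $V\subset\II^\w$, and an open cover $\U$ of $V$, one chooses $k$ large enough that $\{C_{k,i}\}$ refines $\U$, pushes each piece $K\cap C_{k,i}$ into $L_{k,i}$ by a homeomorphism of $C_{k,i}$ close to the identity, and glues.

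The main obstacle is the local push: inside a single tame Hilbert cube $C\cong\II^\w$, one needs a homeomorphism of $C$ close to the identity that carries a given $Z_n$-set $K\cap C$ into the product $L_C=A^{n+1}_n\times\II^\w$. For $n=\w$ this is handled by Chapman's $Z$-set unknotting theorem, and for $n=0$ by the unknotting machinery of the present paper; for intermediate $n$, however, no clean unknotting theorem appears in the literature, and its very formulation is delicate because $Z_n$-sets for finite $n$ can be rather \emph{fat} (a tame copy of $\II^{n+1}$ in $\II^\w$ is a $Z_n$-set but not a $Z_\w$-set, and hence cannot in general be pushed off itself by a close-to-identity homeomorphism). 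A plausible hybrid route is: apply Geoghegan-Summerhill general-position technology in the $\II^{n+1}$-direction of $C\cong\II^{n+1}\times\II^\w$ to push the projection of $K\cap C$ onto $\II^{n+1}$ into $A^{n+1}_n$, while simultaneously using Chapman's $Z_\w$-unknotting in the $\II^\w$-factor to absorb the infinite-dimensional noise. Making the two moves compatible and keeping the total homeomorphism $\U$-close to the identity throughout is, in my view, the main technical challenge standing between the existing methods and the conjecture.
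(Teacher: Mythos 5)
You have not proved this statement, but neither does the paper: it is stated as a \emph{Conjecture} precisely because the cases $0<n<\w$ are open. The paper confirms only the two endpoints, $n=\w$ via the classical pseudoboundary $B(\II^\w)$ and $n=0$ via its main results (Proposition~\ref{p3} plus the implication $(3)\Rightarrow(2)$ of Theorem~\ref{t4}), and your treatment of those two cases is exactly the paper's. So the only substantive question is whether your program for $0<n<\w$ closes the gap, and it does not --- as you candidly say yourself, the missing $Z_n$-unknotting theorem for finite $n>0$ is the heart of the matter, and no argument you give substitutes for it.

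Beyond the gap you acknowledge, two further points in the sketch would fail as written. First, $L_{k,i}=A^{n+1}_n\times\II^\w$ is not closed in $\II^\w$ (a Geoghegan--Summerhill absorptive set in $\II^{n+1}$ is a $\sigma Z_n$-set, not a single $Z_n$-set), so no dimension count can show it is a $Z_n$-set; one must decompose it further, which is harmless for $B_n\in\sigma\Z_n$ but shows the bookkeeping is not yet right. Relatedly, your motivating example is off: a tame copy $\II^{n+1}\times\{q\}$ of $\II^{n+1}$ in $\II^{n+1}\times\II^\w$ \emph{is} a $Z_\w$-set (perturb the second coordinate off the $Z$-set $\{q\}$); the genuinely problematic $Z_n$-but-not-$Z_{n+1}$ sets are rather fibers $\{x\}\times\II^\w$ over interior points $x\in\II^{n+1}$, i.e.\ ``codimension $n{+}1$'' Hilbert cubes. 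Second, and more seriously, the family $\Z_n$ in the conjecture consists of \emph{all} $Z_n$-sets of $\II^\w$, which for finite $n$ includes arbitrarily wild, infinite-dimensional closed sets (every $Z_\w$-set is a $Z_n$-set, and for $n=0$ every closed nowhere dense set), whereas the Geoghegan--Summerhill machinery you propose to run in the $\II^{n+1}$-factor absorbs only tame, strongly negligible finite-dimensional compacta. So even granting a hybrid unknotting theorem, it is unclear that the ``trace'' of an arbitrary $K\in\Z_n$ on a chunk $C\cong\II^{n+1}\times\II^\w$ is the kind of object your local model can receive, or indeed that an arbitrary $K\in\Z_n$ embeds into $B_n$ at all. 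The difficulty therefore begins one step before the unknotting, at the choice of a candidate absorber; this is why the authors leave the intermediate cases as a conjecture.
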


This conjecture is true for $n=\w$ as witnessed by the pseudoboundary $B(\II^\w)=\II^\w\setminus (0,1)^\w$ of $\II^\w$ which is a $\Z_\w$-absorptive set in $\II^\w$. In this paper we shall confirm  Conjecture~\ref{conj1} for $n=0$. In fact, our proof works not only for the Hilbert cube but also for any $\II^k$-manifold of finite or infinite dimension. By a {\em manifold modeled on a space $E$} (briefly, an {\em $E$-manifold\/}) we understand any paracompact space $M$ admitting a cover by open subsets homeomorphic to open subspaces of the model space $E$. In this paper we consider only manifolds modeled on (finite or infinite dimensional) cubes $\II^n$, $n\le\w$. So, from now on, by a {\em manifold} we shall understand an $\II^n$-manifold for some $0<n\le\w$. If a manifold $X$ is finite-dimensional, then its {\em boundary} $\partial X$ consists of all points $x\in X$ which do not have neighborhoods homeomorphic to Euclidean spaces. If $X$ is a Hilbert cube manifold, then we put $\partial X=\emptyset$.

Our approach to constructing $\Z_0$-absorptive sets in manifolds is based on the notion of a tame $G_\delta$-set which is interesting by itself, see \cite{BMRZ}. First we recall some definitions.

A family $\F$ of subsets of a topological space $X$ is called {\em vanishing} if for each open cover $\U$ of $X$ the family $\F'=\{F\in\F:\forall U\in\U,\;\;F\not\subset U\}$ is locally finite in $X$. It is easy to see that a countable family $\F=\{F_n\}_{n\in\w}$ of subsets of a compact metric space $(X,d)$ is vanishing if and only if $\lim_{n\to\infty}\mathrm{diam}(F_n)=0$.

An open subset $B$ of an $\II^n$-manifold $X$ is called a {\em tame open ball} in $X$ if its closure $\bar B$  has on open neighborhood $O(\bar B)$ in $X$ such that the pair $(O(\bar B),\bar B)$ is homeomorphic to the pair $(\IR^n,\II^n)$ if $n<\w$ and to the pair $(\II^\w\times[0,\infty),\II^\w\times[0,1])$ if $n=\w$.
Tame balls form a neighborhood base at each point $x\in X$, which does not belong to the boundary $\partial X$ of $X$ (this is trivial for $n<\w$ and follows from Theorem~12.2 of \cite{Chap} for $n=\w$).

A subset $U$ of a manifold $X$ is called a {\em tame open set in $X$} if $U=\bigcup\U$ for some vanishing family $\U$ of tame open balls having pairwise disjoint closures in $X$. Observe that the family $\U$ is unique and coincides with the family $\C(U)$ of connected components of the set $U$. By $\bar \C(U)=\{\bar C:C\in\C(U)\}$ we shall denote the family of the closures of the connected components of $U$ in $X$.

A subset $G\subset X$ is called a {\em tame $G_\delta$-set in $X$} if $U=\bigcap_{n\in\w}U_n$ for some decreasing sequence $(U_n)_{n\in\w}$ of tame open sets such that the family $\C=\bigcup_{n\in\w}\C(U_n)$ is vanishing and for every $n\in\w$ the family  $\bar\C(U_{n+1})$ refines the family $\C(U_n)$ of connected components of $U_n$.

Tame open and tame $G_\delta$-sets  can be equivalently defined via tame families of tame open balls.
A family $\U$ of non-empty open subsets of a topological space $X$ is called {\em tame} if $\U$ is vanishing and for any distinct sets $U,V\in\U$ one of three possibilities hold: either $\bar U\cap\bar V=\emptyset$ or $\bar U\subset V$ or $\bar V\subset U$.
For a family $\U$ of subsets of a set $X$ by $$\ulim\U=\textstyle{\bigcap\big\{\bigcup}(\U\setminus\F):\F\mbox{ is a finite subfamily of $\U$}\big\}$$ we denote the set of all points $x\in X$ which belong to infinite number of sets $U\in\U$.

\begin{proposition}\label{p2} A subset $T$ of a manifold $X$ is tame open \textup{(}resp. tame $G_\delta$\textup{)} if and only if $T=\bigcup\mathcal T$ \textup{(}resp. $T=\ulim\mathcal T$~\textup{)} for a suitable tame family $\mathcal T$ of tame open balls in $X$.
\end{proposition}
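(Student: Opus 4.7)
The proposition comprises four implications. The two $(\Rightarrow)$ directions are near-tautological: for a tame open set the defining family has pairwise disjoint closures, which is the first alternative of the tame condition; for a tame $G_\delta$-set one takes $\mathcal{T}:=\bigcup_{n\in\w}\C(U_n)$, whose vanishing is given and whose tame comparability follows from the refinement $\bar\C(U_{n+1})\prec\C(U_n)$ together with the pairwise disjoint closures inside each $\C(U_n)$, while $\ulim\mathcal{T}=\bigcap_{n\in\w}U_n$ holds because an $x$ lies in at most one member of each $\C(U_n)$, so being in infinitely many members of $\mathcal{T}$ is equivalent (by monotonicity of $(U_n)$) to $x\in\bigcap_{n\in\w}U_n$. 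The content of the proof therefore lies in the $(\Leftarrow)$ directions, which both rest on a single technical fact that I will call the \emph{no-ascending-chain lemma}: a tame family $\mathcal{T}$ of tame open balls in $X$ admits no infinite strictly ascending chain $V_1\subsetneq V_2\subsetneq\cdots$.

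To prove this lemma I argue by contradiction. Fix $p\in V_1$, so $p\in V_i$ for every $i\in\w$. Because each tame open ball is disjoint from $\partial X$ and tame open balls form a local base at every point of $X\setminus\partial X$, one may choose a tame open ball $B$ with $p\in B\subsetneq V_1$. Then $\W:=\{B,\,X\setminus\{p\}\}$ is an open cover of $X$, and no $V_i$ is contained in either member of $\W$, since $V_i\supsetneq B$ and $p\in V_i$. Vanishing of $\mathcal{T}$ then forces the subfamily $\{V_i:i\in\w\}$ to be locally finite in $X$, which is impossible at $p$ because every neighborhood of $p$ meets every $V_i$.

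With the lemma available, every $V\in\mathcal{T}$ lies in a maximal element, and any two distinct maximal elements of $\mathcal{T}$ have disjoint closures (the other alternatives of the tame condition would force strict inclusion, contradicting maximality). The family $\mathcal{M}$ of maximal elements inherits vanishing from $\mathcal{T}$, so $T=\bigcup\mathcal{M}$ exhibits $T$ as a tame open set, settling the tame open $(\Leftarrow)$. For the tame $G_\delta$ $(\Leftarrow)$ I define a \emph{level} $l(V):=|\{W\in\mathcal{T}:V\subsetneq W\}|$, which is finite by the lemma, and set $U_n:=\bigcup\{V\in\mathcal{T}:l(V)\ge n\}$. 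Applying the maximal-element argument to the subfamily $\{V\in\mathcal{T}:l(V)\ge n\}$ identifies $\C(U_n)$ with the level-$n$ members of $\mathcal{T}$; each level-$(n{+}1)$ member $V$ has a unique parent $V^\uparrow$ of level $n$, and the strict inclusion $V\subsetneq V^\uparrow$ combined with the tame condition yields $\bar V\subset V^\uparrow$, so $\bar\C(U_{n+1})\prec\C(U_n)$. The union $\bigcup_{n\in\w}\C(U_n)=\mathcal{T}$ is vanishing; and for each $x\in X$ the set $\{V\in\mathcal{T}:x\in V\}$ is totally ordered by inclusion, with levels assuming the consecutive values $0,1,2,\ldots$, whence $x\in\bigcap_{n\in\w}U_n$ iff $x$ belongs to infinitely many members of $\mathcal{T}$, i.e.\ $x\in\ulim\mathcal{T}$. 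The decisive step throughout is the no-ascending-chain lemma; the remainder is structural bookkeeping on the forest $(\mathcal{T},\subset)$ induced by a tame family.
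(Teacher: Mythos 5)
Your proof is correct and follows essentially the same route as the paper: both treat the forward implications as immediate and handle the substantive direction by stratifying the tame family $\mathcal T$ according to the (finite) number of its members containing a given one and letting $U_n$ be the union of the $n$-th stratum, which is precisely the paper's rank function $\rank(U)=|{\downarrow}U|$ and its sets $U_n=\bigcup\mathcal T_n$. The only real addition is that you actually prove the finiteness statement (via the cover $\{B,\,X\setminus\{p\}\}$) that the paper merely asserts as a consequence of vanishing; just note that you should run that argument directly on the whole chain $\{W\in\mathcal T:W\supsetneq V\}$ rather than only on ascending sequences, since a totally ordered set with no infinite ascending chain could a priori still be infinite, and your cover argument excludes that case verbatim.
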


\begin{proof} The ``only if'' part follows directly from the definition of a tame open (resp. tame $G_\delta$) set. To prove the ``if'' part, assume that $\mathcal T$ is a tame family of tame open balls in $X$. Endow the family $\mathcal T$ with a partial order $\le$ defined by the reverse inclusion relation, that is $U\le V$ if and only if $U\supset V$. The vanishing property of $\mathcal T$ guarantees that for each set $U\in\mathcal T$ the set ${\downarrow}U=\{V\in\mathcal T:V\le U\}$ is finite. This allows us to define the ordinal $\rank(U)$ letting $\rank(U)=|{\downarrow}{U}|$. For each number $n\in\w$ let $\mathcal T_n=\{U\in\mathcal T:\rank(U)=n+1\}$. It follows from the definition of a tame family that the union $U_n=\bigcup\mathcal T_n$ is a tame open set and $U_n\subset U_{n-1}$, where $U_{-1}=X$. In particular, the union $\bigcup\mathcal T=U_0$ is tame open set in $X$ and the set $T=\ulim\mathcal T=\bigcap_{n\in\w}U_n$ is a tame $G_\delta$-set in $X$.
\end{proof}

The classes of dense tame open sets and dense tame $G_\delta$-sets have the following cofinality property.

\begin{proposition}\label{p3}
\begin{enumerate}
\item Each open subset of a manifold $X$ contains a dense tame open set.
\item Each $G_\delta$-subset of a manifold contains a dense tame $G_\delta$-set.
\end{enumerate}
\end{proposition}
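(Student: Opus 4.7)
The plan is to prove (1) by a direct construction and then bootstrap to (2) by iterating (1) along a defining sequence for the $G_\delta$-set.

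For (1), given an open set $W\subset X$, I would construct a countable family $\U=\{B_k\}_{k\in\w}$ of tame open balls contained in $W$ with pairwise disjoint closures, with diameters shrinking to zero on every compact subset of $W$, and with $\bigcup\U$ dense in $W$. Fix a compatible metric on $X$ (a paracompact $\II^n$-manifold is metrizable) and exhaust $W$ by open sets $W_1\subset W_2\subset\cdots$ with $\overline{W_n}$ compact and $\overline{W_n}\subset W_{n+1}$, available because $X$ is locally compact and separable. Because $W\setminus\partial X$ is open and dense in $W$ and because tame open balls form a neighborhood base at every point of $X\setminus\partial X$, at stage $n$ I would append a finite list of tame open balls of diameter at most $1/n$, each with closure contained in the open set $W\setminus\bigcup_{j<n}\bar B_j$, so that the running union becomes $1/n$-dense in $\overline{W_n}$; compactness of $\overline{W_n}$ lets finitely many balls suffice. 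The resulting family is locally finite on every compact subset of $W$ and has diameters going to zero, so it is vanishing, and its union is dense in $W\setminus\partial X$, hence in $W$.

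For (2), let $G=\bigcap_{n\in\w}V_n$ with $V_n$ open and decreasing. I would inductively build a decreasing sequence of dense tame open sets $U_0\supset U_1\supset\cdots$ with $U_n\subset V_n$ as follows. Start with a dense tame open $U_0\subset V_0$ furnished by (1). Given $U_n$, process each connected component $C\in\C(U_n)$ separately and invoke (1) on the open set $C\cap V_{n+1}$ to produce a dense tame open $U^C\subset C\cap V_{n+1}$ whose component tame balls have closures inside $C\cap V_{n+1}$ (automatic from the construction of (1)) and diameter at most $1/(n+1)$, and set $U_{n+1}=\bigcup_{C\in\C(U_n)}U^C$. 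Then $\bar\C(U_{n+1})$ refines $\C(U_n)$ by construction, and the uniform diameter bound combined with local finiteness at each stage makes the combined family $\bigcup_{n\in\w}\C(U_n)$ vanishing, so $T=\bigcap_{n\in\w}U_n\subset G$ is a tame $G_\delta$-set. For density, I would show inductively that each $U_n$ is dense in $\bar G$: the base case follows from $U_0$ being dense in $V_0\supset G$, and the inductive step observes that $U_n\cap V_{n+1}$ is dense in $\bar G$ (using $G\subset V_{n+1}$ and density of $G$ in $\bar G$), and so is its dense tame open subset $U_{n+1}$. Since $\bar G$ is closed in the Polish space $X$ and hence a Baire space, the intersection $T$ is dense in $\bar G$ and in particular dense in $G$.

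The main obstacle is coordinating three constraints at each stage of (2): (a)~$\bar\C(U_{n+1})$ must refine $\C(U_n)$ (closures of new components must sit strictly inside parent components, not merely in their closures), (b)~the accumulated family $\bigcup_n\C(U_n)$ must be vanishing, and (c)~$U_{n+1}$ must be dense in $U_n\cap V_{n+1}$. All three are simultaneously handled by the freedom to shrink tame balls inside any open neighborhood of a non-boundary point, which is precisely the ingredient built into part (1).
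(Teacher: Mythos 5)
Your proposal is correct and follows essentially the same route as the paper: part (1) is proved by greedily placing a countable, pairwise-disjointly-closed family of small tame balls densely in the open set (the paper uses a countable dense sequence where you use a compact exhaustion with finite $\e$-nets, but the effect is identical), and part (2) is proved by iterating (1) inside the components of the previous stage while shrinking meshes, exactly as in the paper's inductive construction of the sequence $(V_n)$. The only cosmetic difference is that you make the density of the limit set explicit via the Baire category theorem, which the paper leaves implicit.
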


\begin{proof} Let $X$ be a manifold and $d$ be a metric generating the topology of $X$.
\smallskip

1. Given an open set $V\subset X$ and an open cover $\U$ of $V$ we shall construct
a tame open set $W\subset X$ such that $W$ is dense in $V$ and the family $\bar \C(W)$ refines the cover $\U$. Replacing $V$ by $V\setminus\partial X$, we can assume that the set $V$ does not intersect the boundary $\partial X$ of $X$. Replacing the set $V$ by $V\setminus\{v\}$ for some point $v\in V$, we can additionally assume that the set $V$ is not compact. We can also assume that $V=\bigcup \U$. Without loss of generality, the manifold $X$ is connected and hence separable. So, we can fix a countable dense subset $\{x_n\}_{n\in\w}$ in $V$.
By induction we can construct an increasing number sequence $(n_k)_{k\in\w}$ and a sequence $B_k$ of tame open balls in $X$ such that for each $k\in\w$ the following conditions hold:
\begin{enumerate}
\item $n_k$ is the smallest number $n$ such that $x_n\not\in \bigcup_{i<k}\bar B_k$;
\item $B_k$ is a tame open ball such that $x_{n_k}\in B_k$, the closure $\bar B_k$ of $B_k$ in $X$ has diameter $<2^{-k}$ and is contained in  $U\setminus \bigcup_{i<k}\bar B_k$ for some set  $U\in\U$.
\end{enumerate}
It is easy to check that $W=\bigcup_{k\in\w}B_k$ is a required dense tame open set in $V$ with $\bar\C(W)=\{\bar B_k\}_{k\in\w}\prec\U$.
\smallskip

2. Fix an arbitrary $G_\delta$-set $G$ in $X$ and write it as the intersection $G=\bigcap_{n\in\w}U_n$ of a decreasing sequence $(U_n)_{n\in\w}$ of open sets in $X$. By the (proof of the) preceding item, we can construct inductively a decreasing sequence $(V_n)_{n\in\w}$ of tame open sets in $X$ such that for every $n\in\w$ we get
\begin{itemize}
\item $\mesh\, \bar \C(V_n)<2^{-n}$,
\item $\bigcup\bar\C(V_n)\subset V_{n-1}\cap U_n$, and
\item $V_n$ is dense in $V_{n-1}\cap U_n$.
\end{itemize}
Here we assume that $V_{-1}=X$.
It follows that $\V=\bigcup_{n\in\w}\C(V_n)$ is a tame family of tame open balls whose limit set  $\ulim\V=\bigcap_{n\in\w}V_n$ is a required dense tame $G_\delta$-set in $G$.
\end{proof}

It is easy to see that any two tame open balls in a connected $\II^n$-manifold are ambiently homeomorphic. A similar fact also holds also for dense tame open sets.
Generalizing earlier results of Whyburn \cite{Whyburn} and Cannon \cite{Cannon}, Banakh and Repov\v s in \cite[Corollary 2.8]{BR} proved the following Uniqueness Theorem for dense tame open sets.

\begin{theorem}[Uniqueness Theorem for Dense Tame Open Sets in Manifolds]\label{t2} Any two dense tame open sets $U,U'\subset X$ of a manifold $X$ are ambiently homeomorphic. Moreover, for each open cover $\U$ of $X$ there is a homeomorphism $h:(X,U)\to(X,U')$ such that $(h,\id)\prec \St(\bar \C(U),\U)\vee \St(\bar \C(U'),\U)$.
\end{theorem}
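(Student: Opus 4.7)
My plan is to construct $h$ by a back-and-forth procedure that matches the connected components of $U$ with those of $U'$ one at a time, realizing each match by a homeomorphism of $X$ supported in a small neighborhood, and then passing to the limit by exploiting the vanishing of $\C(U)$ and $\C(U')$. After reducing to $X$ connected, I enumerate $\C(U)=\{B_n\}_{n\in\w}$ and $\C(U')=\{B'_n\}_{n\in\w}$. Replacing $\U$ by a star refinement if needed, I arrange that every ball appearing in the construction whose diameter falls below a prescribed threshold sits in a single element of $\U$.

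Inductively I build homeomorphisms $h_k\colon X\to X$ and finite partial bijections $\phi_k\colon\mathcal F_k\to\mathcal F'_k$ with $\mathcal F_k\subset\C(U)$ and $\mathcal F'_k\subset\C(U')$ so that $h_k(\bar B)=\overline{\phi_k(B)}$ for each $B\in\mathcal F_k$, $h_k=h_{k-1}$ on every previously matched closed ball, and $h_k$ differs from $h_{k-1}$ only inside $\St(\bar B,\U)\cup\St(\bar B',\U)$ for the single pair $(B,B')$ newly matched at stage $k$. At odd stages I select the first unmatched $B_n\in\C(U)$; the closure of $h_{k-1}(B_n)$ is a tame closed ball, and density of $U'$ combined with vanishing of $\C(U')$ yields an unmatched $B'_m\in\C(U')$ of sufficiently small diameter that $B'_m\subset\St(\bar B_n,\U)$. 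I then produce a homeomorphism $g_k$ of $X$, supported in $\St(\bar B_n,\U)\cup\St(\bar B'_m,\U)$ and the identity on every previously matched closed ball, with $g_k(\overline{h_{k-1}(B_n)})=\bar B'_m$, and set $h_k=g_k\circ h_{k-1}$. Even stages perform the symmetric construction using $h_{k-1}^{-1}$ on the $\C(U')$ side.

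The main obstacle is the existence of the sliding homeomorphism $g_k$. For $n<\w$ it follows from the classical ambient isotopy theorem for tame balls in $\II^n$-manifolds: one joins $\overline{h_{k-1}(B_n)}$ to $\bar B'_m$ by a tame arc avoiding all previously matched balls, then uses the product structures $(O(\bar B),\bar B)\cong(\IR^n,\II^n)$ to carry the first ball onto the second by an ambient isotopy whose support can be confined to any preassigned open neighborhood of the arc. For $n=\w$ the same strategy works, but the isotopy is produced by the $Z$-Set Unknotting Theorem of Chapman: each closed tame ball is a $Z$-set in its collar neighborhood $\II^\w\times[0,\infty)$, and disjoint tame balls of equal small diameter are $Z$-equivalent by a homeomorphism extendable to $X$ with arbitrarily small support. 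In either case the support fits inside $\St(\bar B_n,\U)\cup\St(\bar B'_m,\U)$, and disjointness of previously matched closures from this support ensures that the matching already achieved is preserved.

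Finally I verify convergence and the nearness estimate. Since $g_k$ is supported in the union of two $\U$-stars whose seeds have diameters tending to zero, both sequences $h_k$ and $h_k^{-1}$ are uniformly Cauchy and their common limit $h$ is a homeomorphism of $X$. The back-and-forth ensures that the total matching $\phi=\bigcup_k\phi_k$ is a bijection $\C(U)\to\C(U')$, so $h(U)=U'$. For any $x\in X$ moved by $h$, the motion is performed at a unique stage $k$, and the pair $\{x,h(x)\}$ lies in $\St(\bar B_n,\U)\cup\St(\bar B'_m,\U)\in\St(\bar\C(U),\U)\vee\St(\bar\C(U'),\U)$; this yields the required control $(h,\id)\prec\St(\bar\C(U),\U)\vee\St(\bar\C(U'),\U)$.
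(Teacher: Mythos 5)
This theorem is not actually proved in the present paper: it is imported from \cite[Corollary 2.8]{BR}, so there is no in-text argument to compare yours with. Judged on its own, your back-and-forth matching of components is the right general shape (it is essentially the Whyburn--Cannon strategy), but as written it has a genuine gap at its quantitative core: the convergence of $(h_k)$ and the final nearness estimate are not established.

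Concretely: at an odd stage $k$ you must carry $\overline{h_{k-1}(B_{n_k})}$ onto an unmatched component $\bar B'_{m_k}$ of $U'$. For large $k$ the unmatched components of $U'$ available near the target location are all very small (the family $\C(U')$ is vanishing), so any homeomorphism achieving this match displaces some point by at least roughly $\tfrac12\diam \overline{h_{k-1}(B_{n_k})}$. Convergence therefore requires $\diam h_{k-1}(\bar B_{n_k})\to 0$, and nothing in your construction guarantees this: although $\diam \bar B_{n_k}\to 0$, the maps $h_{k-1}$ change with $k$ and their moduli of continuity are uncontrolled --- each $g_j$ is supported on a neighborhood of an arc that may slice through and stretch the images of still-unmatched small balls, and these distortions can accumulate over infinitely many stages. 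Your stated reason for Cauchyness (``supported in the union of two $\U$-stars whose seeds have diameters tending to zero'') is also invalid, since the diameter of $\St(\bar B,\U)$ is bounded below by the diameters of the members of the \emph{fixed} cover $\U$ meeting $\bar B$; and the convergence of $(h_k^{-1})$ additionally needs the modulus of continuity of $h_{k-1}^{-1}$, which you never invoke. Relatedly, the estimate $(h,\id)\prec\St(\bar\C(U),\U)\vee\St(\bar\C(U'),\U)$ is argued via ``the motion of $x$ is performed at a unique stage,'' but a point of $X\setminus U$ adjacent to infinitely many components can be moved at infinitely many stages (you only freeze previously matched closed balls, not previous supports), so $\{x,h(x)\}$ need not lie in a single stage's support. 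The standard repair --- and what proofs of this type actually do --- is to interpolate a sequence of cellular partitions of $X$ with mesh tending to zero, require $h_k$ to match the $k$-th partitions cell by cell, and match within each cell only the finitely many components larger than the next mesh; the cell structure is what simultaneously forces $\diam h_{k-1}(\bar B_{n_k})\to0$ and confines the total motion of each point to a single element of the prescribed cover. A lesser but real issue is dimension one, where ``join by an arc avoiding previously matched balls'' can fail because finitely many disjoint closed balls separate a $1$-manifold.
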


This theorem will be our main tool in the proof of the following Uniqueness Theorem for dense tame $G_\delta$-sets.

\begin{theorem}[Uniqueness Theorem for Dense Tame $G_\delta$-Sets in Manifolds]\label{t3} Any two dense tame $G_\delta$-sets $G,G'$ in a manifold $X$ are ambiently homeomorphic. Moreover, for each open cover $\U$ of $X$ there is a homeomorphism $h:(X,G)\to(X,G')$ such that $(h,\id)\prec\U$.
\end{theorem}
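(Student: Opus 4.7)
The plan is to iterate Theorem~\ref{t2} componentwise along the hierarchical decomposition of the two tame $G_\delta$-sets, producing a Cauchy sequence of homeomorphisms whose limit realises the desired $h$. Write $G=\bigcap_{n\in\w}U_n$ and $G'=\bigcap_{n\in\w}U'_n$ with $(U_n)$ and $(U'_n)$ decreasing sequences of tame open sets whose components are vanishing and satisfy $\bar\C(U_{n+1})\prec\C(U_n)$ and $\bar\C(U'_{n+1})\prec\C(U'_n)$. Fix a metric $d$ generating the topology of $X$ and a summable sequence $\e_n>0$ small with respect to a Lebesgue number of $\U$.

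I would construct homeomorphisms $h_n\colon X\to X$ inductively so that $h_n(U_k)=U'_k$ for all $k\le n$, and so that for $n\ge 1$ one has $h_n=\phi_n\circ h_{n-1}$ with $\phi_n$ the identity off $U'_{n-1}$, preserving each component of $U'_{n-1}$ setwise, and with $d(\phi_n(x),x)<\e_n$. The base step applies Theorem~\ref{t2} to $U_0,U'_0\subset X$ suitably refined. For the inductive step, fix a component $C'$ of $U'_n$: both $h_n(U_{n+1})\cap C'$ and $U'_{n+1}\cap C'$ are dense tame open in the tame open ball $C'$, so Theorem~\ref{t2} applied inside $C'$ with an open cover $\U_{n,C'}$ of $C'$ refining $\U$ and having mesh tending to $0$ at $\partial C'$ yields a homeomorphism $\psi_{C'}$ of $C'$ matching them. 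The control bound $(\psi_{C'},\id)\prec\St(\bar\C(h_n(U_{n+1})\cap C'),\U_{n,C'})\vee\St(\bar\C(U'_{n+1}\cap C'),\U_{n,C'})$ from Theorem~\ref{t2}, the vanishing of the two component families, and the refinement $\bar\C(U'_{n+1})\prec\C(U'_n)$ (which confines the closures of components to the interior of $C'$) together force $d(\psi_{C'}(y),y)\to 0$ as $y\to\partial C'$; thus $\psi_{C'}$ extends to $\bar{C'}$ by the identity on $\partial C'$. Gluing across components and extending by identity off $U'_n$ produces a continuous $\phi_{n+1}\colon X\to X$ (continuity at accumulation points of components follows from vanishing of diameters), and $h_{n+1}:=\phi_{n+1}\circ h_n$ preserves $U'_k$ setwise for $k\le n$ while realising $h_{n+1}(U_{n+1})=U'_{n+1}$.

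For the limit, since $\phi_{m+1}=\id$ off $U'_m$, the sequence $h_m(x)$ stabilises at $h_n(x)$ whenever $x\notin U_n$, so $h:=\lim h_m$ coincides with $h_n$ off $U_n$. For $x\in G$, componentwise preservation by each $\phi_{m+1}$ together with $\bar\C(U'_{m+1})\prec\C(U'_m)$ produces a nested sequence of components $C'_m\in\C(U'_m)$ with $h_m(x)\in C'_m$ and $\bar C'_{m+1}\subset C'_m$; vanishing gives $\diam C'_m\to 0$, so $h_m(x)$ converges to the unique point of $\bigcap_m C'_m\subset G'$, proving $h(G)\subset G'$. The symmetric argument for $h_m^{-1}=h_{m-1}^{-1}\circ\phi_m^{-1}$ (using that $\phi_m^{-1}$ is also the identity off $U'_{m-1}$ and component-preserving) yields $h(G)=G'$ and continuity of $h^{-1}$. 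Summability of $\e_n$ (chosen with respect to the modulus of continuity of the previously constructed $h_n^{-1}$) provides uniform convergence, hence continuity of both $h$ and $h^{-1}$; combined with the base step, $(h,\id)\prec\U$.

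The main obstacle is the boundary-extension claim inside each component $C'$, namely $\psi_{C'}(y)\to y$ as $y\to\partial C'$. Its verification rests on three ingredients: vanishing of $\bar\C(h_n(U_{n+1})\cap C')$ and $\bar\C(U'_{n+1}\cap C')$, the fact that their closures lie in the interior of $C'$ by the refinement hypotheses on the tame $G_\delta$-decomposition, and the shrinking mesh of $\U_{n,C'}$ near $\partial C'$. Together these make the stars appearing in the control bound small near $\partial C'$, so only small components contribute there and the displacement of $\psi_{C'}$ tends to zero. Making these estimates cooperate uniformly across infinitely many components at each stage and across all stages, so that the gluing produces a genuine homeomorphism of $X$ with $h(G)=G'$ rather than merely a bijection, is the main technical work the full proof must supply.
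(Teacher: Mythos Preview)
Your proposal follows essentially the same strategy as the paper: iterate Theorem~\ref{t2} componentwise inside the current level of the tame decomposition, glue the resulting homeomorphisms across components using the vanishing property, and pass to a limit of the resulting Cauchy sequence of homeomorphisms. The paper also composes on one side (it writes $h_n=h_{n-1}\circ g_n$ with $g_n$ supported in $U_{n-1}$, dual to your $h_{n+1}=\phi_{n+1}\circ h_n$), and the boundary-extension issue you isolate is exactly what the paper handles via its condition~(6), namely $\St^2(\U_n)\prec\{B(x,d(x,X\setminus U_n)/2):x\in U_n\}$.

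The one substantive technical difference is that the paper does \emph{not} work with the given decompositions $(U_n),(U'_n)$ of $G,G'$. Instead, at stage $n$ it \emph{constructs} new tame open sets $U_n,U'_n$ by selecting only those balls from the tame families $\mathcal G_n,\mathcal G'_n$ whose closures already refine the covers $\U_{n-1}$ and $\U'_{n-1}\wedge h_{n-1}(\U_{n-1})$ chosen at the previous step. This buys an immediate bound $\mesh(\bar\C(U_n)),\mesh(\bar\C(U'_n))<2^{-n-2}$ and hence the clean Cauchy estimates $d(h_n,h_{n-1})\le 2^{-n-1}$ and $d(h_n^{-1},h_{n-1}^{-1})\le 2^{-n-1}$, with no appeal to moduli of continuity. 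In your scheme the components of the given $U_n,U'_n$ are not under your control, so the assertion $d(\phi_n,\id)<\e_n$ for a sequence $(\e_n)$ fixed in advance is not achievable in general (in a non-compact manifold $\mesh(\C(U'_n))$ need not even tend to~$0$); you would have to pass to subsequences adaptively, or argue via local uniform convergence. Also, the detour through the modulus of continuity of $h_n^{-1}$ is unnecessary: since $\phi_m$ preserves each component of $U'_{m-1}$ setwise, $h_m^{-1}=h_{m-1}^{-1}\circ\phi_m^{-1}$ differs from $h_{m-1}^{-1}$ only inside components of $U_{m-1}$, giving $d(h_m^{-1},h_{m-1}^{-1})\le\mesh(\C(U_{m-1}))$ directly. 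The paper's adaptive choice of $U_n,U'_n$ is precisely the device that makes ``these estimates cooperate uniformly,'' as you put it in your last paragraph.
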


\begin{proof} Fix a bounded complete metric $d$ generating the topology of the manifold $X$. By \cite[8.1.10]{En}, the metric $d$ can be chosen so that the cover $\{\bar B(x,1):x\in X\}$ by closed balls of radius 1 refines the cover $\U$. In this case any two functions $f,g:X\to X$ with $d(f,g)=\sup_{x\in X}d(f(x),g(x))\le 1$ are $\U$-near.

Represent the tame $G_\delta$-sets $G$ and $G'$ as the limit sets $G=\ulim \mathcal G$ and $G'=\ulim \mathcal G'$ of suitable tame families $\mathcal G$ and $\mathcal G'$ of tame open balls in $X$. For every $n\in\w$ let
$\mathcal G_n=\{U\in\mathcal G:|\{V\in\mathcal G:V\supset \bar U\}|\ge n\}$ and
$\mathcal G'_n=\{U\in\mathcal G':|\{V\in\mathcal G':V\supset \bar U\}|\ge n\}$. It follows that $G=\bigcap_{n\in\w}\bigcup\mathcal G_n$ and $G'=\bigcap_{n\in\w}\bigcup\mathcal G'_n$.

Let $U_{-1}=U_{-1}'=X$ and $h_{-1}:X\to X$ be the identity homeomorphism of $X$.
Let also $\U_{-1}=\U_{-1}'$ be a cover of $X$ by open subsets of diameter $\le\frac1{8}$.

For every $n\in\w$ we shall construct a
homeomorphism $h_n:X\to X$, two tame open sets $U_n,U_n'\subset X$, and open covers $\U_n$, $\U_n'$ of the sets $U_n$, $U_n'$, respectively, such that
\begin{enumerate}
\item $G\subset U_n\subset U_{n-1}\cap \bigcup\mathcal G_n$ and $\bar \C(U_n)\prec \U_{n-1}$;
\item $G'\subset U_n'\subset U_{n-1}'\cap\bigcup\mathcal G'_n$ and $\bar\C(U_n')\prec \U_{n-1}'\wedge h_{n-1}(\U_{n-1})$;
\item $h_n(U_n)=U_n'$;
\item $h_n|X\setminus U_{n-1}=h_{n-1}|X\setminus U_{n-1}$;
\item $d(h_n,h_{n-1})\le 2^{-n-1}$ and $d(h_n^{-1},h_{n-1}^{-1})\le2^{-n-1}$;
\item $\mesh(\U_n')<2^{-n-3}$, $\mesh(\U_n)<2^{-n-3}$, and $\St^2(\U_n)\prec\{B(x,d(x,X\setminus U_n)/2):x\in U_n\}$.
\end{enumerate}

Assume that for some $n\in\w$ the open sets $U_{n-1},U_{n-1}'$, open covers $\U_{n-1},\U_{n-1}'$  and a homeomorphism $h_{n-1}:(X,U_{n-1})\to (X,U_{n-1}')$ satisfying the conditions (1)--(6) have been constructed. Consider the subfamilies $\F_n=\{U\in\mathcal G_n:\{\bar U\}\prec\U_{n-1}\}$ and
$\F'_n=\{U\in\mathcal G_n':\{\bar U\}\prec \U_{n-1}'\wedge h_{n-1}(\U_{n-1})\}$. The vanishing property of the tame families $\mathcal G$ and $\mathcal G'$ implies that the sets $U_n=\bigcup\F_n$ and $U_n'=\bigcup\F_n'$ satisfy the conditions (1), (2) of the inductive construction. The sets $U_n$ and $U'_n$ are tame open, being unions of the tame families $\F_n$ and $\F_n'$, respectively. Moreover, $\bar\C(U_n)\prec \U_{n-1}$ and $\bar\C(U'_n)\prec \U_{n-1}'\wedge h_{n-1}(\U_{n-1})$.

Now we shall construct a homeomorphism $h_n:(X,U_n)\to (X,U_n')$. Since $h_{n-1}(U_{n-1})=U_{n-1}'$, each connected component $C\in\C(U_{n-1})$ of the open set $U_{n-1}$ maps onto the connected component $C'=h_{n-1}(C)\in\C(U'_{n-1})$ of the set $U_{n-1}'$. Taking into account that each set $\bar B\in\bar \C(U_n)$ is a compact connected subset of the open set $\bigcup\U'_{n-1}=U'_{n-1}$, we see that the intersection $U_n'\cap C'$ is a dense tame open set in the open set $C'$. Consequently, its image  $h_{n-1}^{-1}(U_n'\cap C')$ is a dense tame open set in the open set $C=h^{-1}_{n-1}(C')$. By Theorem~\ref{t2}, there is a homeomorphism of pairs $g_C:(C,C\cap U_n)\to (C,h_{n-1}^{-1}(C'\cap U_n'))$ which is $\W_C$-near to the identity map $\id_C:C\to C$ for the cover
 $\W_C=\St(\bar \C(C\cap U_n),\U_{n-1})\vee \St(\bar \C(h_{n-1}^{-1}(C'\cap U_n')),\U_{n-1})$.

Taking into account that $$\bar \C(C\cap U_n)\prec\bar\C(U_n)\prec \U_{n-1}\mbox { and }\bar \C(h_{n-1}^{-1}(U'_n\cap C'))\prec\bar\C(h_{n-1}^{-1}(U_n'))=h_{n-1}^{-1}(\bar \C(U'_n))\prec h_{n-1}^{-1}(h_{n-1}(\U_{n-1}))=\U_{n-1},$$ we conclude that
$$
\begin{aligned}
\W_C&=\St(\bar \C(C\cap U_n),\U_{n-1})\vee \St(\bar \C(h_{n-1}^{-1}(C'\cap U_n')),\U_{n-1})\prec\St(\U_{n-1},\U_{n-1})\vee\St(\U_{n-1},\U_{n-1})=\\
&=\St(\U_{n-1})\vee\St(\U_{n-1})\prec\St^2(\U_{n-1})\prec\{B(x,d(X\setminus U_{n-1})/2):x\in U_{n-1}\}.
\end{aligned}
$$

Now the vanishing property of the family $\C(U_{n-1})$ implies that the map $g_n:X\to X$ defined by $$g_n(x)=\begin{cases}
 x&\mbox{if $x\notin U_{n-1}$},\\
 g_C&\mbox{if $x\in C\in\C(U_{n-1})$}
 \end{cases}
 $$is a homeomorphism of $X$ such that $(g_n,\id)\prec\St^2(\U_{n-1})$ and $(g_n,\id)\prec\C(U_{n-1})$. Then $h_n=h_{n-1}\circ g_n$ is a homeomorphism of $X$ satisfying the conditions (3) and (4) of the inductive construction.

To prove the condition (5) we shall consider separately the cases of $n=0$ and $n>0$. If $n=0$, then $h_0=g_0$ and hence $(h_0,h_{-1})=(g_0,\id)\prec \St^2(\U_{-1})$. It follows from $\mesh(\U_{-1})\le 1/8$ that $d(h_0^{-1},h_{-1}^{-1})=d(h_0,h_{-1})\le \mesh(\St^2(\U_{-1}))\le \frac12$.

If $n>0$, then
$(h_n,h_{n-1})=(h_{n-1}\circ g_n,h_{n-1}\circ\id)\prec h_{n-1}(\C(U_{n-1}))=\C(U'_{n-1})\prec\U_{n-2}'
$
implies
$d(h_n,h_{n-1})\le \mesh(\U'_{n-2})\le 2^{-n-1}$.
By analogy,
$(h_{n}^{-1},h_{n-1}^{-1})=(g_n^{-1}\circ h^{-1}_{n-1},h^{-1}_{n-1})=(g_n^{-1},\id)=(g_n,\id)\prec\C(U_{n-1})\prec\U_{n-2}$ implies $d(h_n^{-1},h_{n-1}^{-1})\le \mesh(\U_{n-2})\le 2^{-n-1}$. So, the condition (5) holds.

Finally, using the paracompactness of the metrizable spaces $U_n$ and $U_n'$ choose two open covers $\U_n$ and $\U_n'$ of $U_n$ and $U_n'$ satisfying the condition (6).
\smallskip

After completing the inductive construction, we obtain a sequence of homeomorphisms $h_n:(X,U_n)\to (X,U_n')$, $n\in\w$. The condition (5) guarantees that the limit map $h=\lim_{n\to\infty}h_n$ is a well-defined homeomorphism of $X$ such that $d(h,\id)\le1$. Moreover, the conditions (1) and (3) imply
$$h(G)=h\big(\bigcap_{n\in\w}U_n\big)=\bigcap_{n\in\w}h(U_n)=\bigcap_{n\in\w}U_n'=G'.$$ By the choice of the metric $d$, the inequality $d(h,\id)\le 1$ implies $(h,\id)\prec\U$. So, $h:(X,G)\to(X,G')$ is a required homeomorphism of pairs with $(h,\id)\prec\U$.
\end{proof}

Now we are able to prove a characterization of $\sigma\Z_0$-universal sets in manifolds.

\begin{theorem}[Characterization of $\sigma\Z_0$-Universal Sets in Manifolds]\label{t4} For a subset $A$ of a manifold $X$ the following conditions are equivalent:
\begin{enumerate}
\item $A$ is $\sigma\Z_0$-universal in $X$;
\item $A$ is $\Z_0$-absorptive in $X$;
\item the complement $X\setminus A$ is a dense tame $G_\delta$-set in $X$.
\end{enumerate}
\end{theorem}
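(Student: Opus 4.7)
The plan is to prove the cyclic chain $(3)\Rightarrow(2)\Rightarrow(1)\Rightarrow(3)$, with $(3)\Rightarrow(2)$ absorbing the bulk of the work.

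The key technical step is a restriction lemma: \emph{if $G$ is a tame $G_\delta$-set in $X$ and $V\subset X$ is open, then $G\cap V$ is a tame $G_\delta$-set in $V$.} Writing $G=\ulim\mathcal T$ for a tame family $\mathcal T$ of tame open balls, the natural candidate is $\mathcal T_V=\{B\in\mathcal T:\bar B\subset V\}$. Each $B\in\mathcal T_V$ is a tame open ball in $V$ (its $\IR^n$- or $\II^\w\times[0,\infty)$-type neighborhood in $X$ can be shrunk inside $V$ since $\bar B$ is compact), and the disjoint-or-nested condition is inherited from $\mathcal T$. Vanishing of $\mathcal T_V$ in $V$ is the delicate point: given an open cover $\U$ of $V$ and a point $x\in V$, pick an open neighborhood $W$ of $x$ with $\bar W\subset V$; then $\U\cup\{X\setminus\bar W\}$ is an open cover of $X$, so vanishing of $\mathcal T$ in $X$ limits the number of balls $B\in\mathcal T$ meeting $W$ that fail to lie in this extended cover. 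Any $B\in\mathcal T_V$ meeting $W$ and not contained in any element of $\U$ automatically fails to lie in $X\setminus\bar W$, so local finiteness transfers to $V$. A direct check using that $\mathcal T$-balls through a fixed $x\in V$ eventually have diameter small enough to fit inside $V$ yields $\ulim\mathcal T_V=G\cap V$, and Proposition~\ref{p2} then identifies $G\cap V$ as a tame $G_\delta$-set in $V$; density in $V$ follows from density of $G$ in $X$.

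With the lemma in hand, $(3)\Rightarrow(2)$ proceeds as follows. Given $K\in\Z_0$, open $V\subset X$, and an open cover $\U$ of $V$, Proposition~\ref{p3}(2) applied to the open dense subset $V\setminus K$ of $V$ produces a dense tame $G_\delta$-set $G'\subset V\setminus K$ in $V$. Theorem~\ref{t3} applied inside the manifold $V$ to the dense tame $G_\delta$-sets $G\cap V$ and $G'$ yields a homeomorphism $h:V\to V$ with $h(G\cap V)=G'$ and $(h,\id)\prec\U$. Since $h(G\cap V)=G'\subset V\setminus K$, we get $h^{-1}(K\cap V)\cap(G\cap V)=\emptyset$, so $h^{-1}(K\cap V)\subset V\setminus G=A\cap V$; and $(h^{-1},\id)\prec\U$ follows from $(h,\id)\prec\U$ by the substitution $y=h(z)$. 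Thus $h^{-1}$ is the required absorbing homeomorphism. Membership $A\in\sigma\Z_0$ is automatic from $X\setminus A$ being a dense $G_\delta$-set.

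The implication $(2)\Rightarrow(1)$ is immediate from Corollary~\ref{c1}, whose hypothesis is now satisfied: Proposition~\ref{p3}(2) applied to $X$ yields a dense tame $G_\delta$-set $G_0\subset X$, and $(3)\Rightarrow(2)$ makes $A_0=X\setminus G_0$ a concrete $\Z_0$-absorptive set. For $(1)\Rightarrow(3)$, keep this $A_0$: Corollary~\ref{c1} upgrades the $\sigma\Z_0$-universal set $A$ to a $\Z_0$-absorptive set, and since the class $\Z_0$ of closed nowhere dense subsets is topologically invariant, Theorem~\ref{t1} produces a homeomorphism $h:X\to X$ with $h(A)=A_0$, so $h(X\setminus A)=G_0$. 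Every ingredient in the definition of a dense tame $G_\delta$-set is topological, so $X\setminus A$ is itself a dense tame $G_\delta$-set. The chief technical obstacle throughout is the restriction lemma, whose vanishing verification requires the cover-extension trick above rather than a direct application of the vanishing of $\mathcal T$ in $X$.
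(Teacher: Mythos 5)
Your proof is correct and follows essentially the same route as the paper: Proposition~\ref{p3} to produce a dense tame $G_\delta$-set avoiding $K$ inside $V$, Theorem~\ref{t3} to match it with $V\setminus A$ (you use $h^{-1}$ where the paper arranges the target as $V\setminus(A\cup K)$ and uses $h$ directly), and Corollary~\ref{c1} together with Theorem~\ref{t1} for the remaining implications. Your explicit restriction lemma, with the cover-extension argument for the vanishing property of $\{B\in\mathcal T:\bar B\subset V\}$ in $V$, is a welcome elaboration of a step the paper dispatches with a bare reference to Proposition~\ref{p2}.
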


\begin{proof} We shall prove the equivalences $(3)\Leftrightarrow(2)\Leftrightarrow(1)$. Let $d$ be a metric generating the topology of the manifold $X$.
\smallskip

To prove that $(3)\Ra(2)$, assume that the complement $X\setminus A$ is a dense tame $G_\delta$-set in $X$. To prove that $A$ is $\Z_0$-absorptive, fix any open set $V\subset X$, an open cover $\U$ of $V$ and a closed nowhere dense subset $K\subset X$.
 We lose no generality assuming that $\U\prec\{B(x,d(x,X\setminus V)/2):x\in V\}$. Since $V\setminus (A\cup K)$ is a dense $G_\delta$-set in $V$, we can apply Proposition~\ref{p3} and find a dense tame $G_\delta$-set $G\subset V\setminus (A\cup K)$. The characterization of tame $G_\delta$-sets given in Proposition~\ref{p2} implies that the intersection $V\cap (X\setminus A)=V\setminus A$ is a dense tame $G_\delta$-set in $V$. By Theorem~\ref{t3}, there is a homeomorphism of pairs $h:(V,G)\to(V,V\setminus A)$ such that $(h,\id)\prec\U$. Since $\U\prec\{B(x,d(x,X\setminus V)/2):x\in V\}$, the homeomorphism $h$ of $V$ extends to a homeomorphism $\bar h:X\to X$ such that $\bar h|X\setminus V=\id$. Observing that $\bar h(V\cap K)\subset \bar h(V\setminus G)=V\cap A$, we see that the set $A$ is  $\Z_0$-absorptive.
\smallskip

To prove that $(2)\Ra(3)$, assume that the set $A$ is $\Z_0$-absorptive. By Proposition~\ref{p3}, the dense $G_\delta$-set $X\setminus A$ contains a dense tame $G_\delta$-set $G$ in $X$. Since $A\subset X\setminus G$, the set $X\setminus G\in\sigma\Z_0$ is $\Z_0$-absorptive. By the Uniqueness Theorem~\ref{t3}, there is a homeomorphism of pairs $h:(X,A)\to (X,X\setminus G)$. Then $X\setminus A=h(G)$ is a dense tame $G_\delta$-set in $X$, which completes the proof of the implication $(2)\Ra(3)$.
\smallskip

By Proposition~\ref{p3}, $X$ contains a dense tame $G_\delta$-set $G$ and by the implication $(3)\Ra(2)$ proved above the complement $X\setminus G$ is $\Z_0$-absorptive. Now Corollary~\ref{c1} yields the equivalence $(2)\Leftrightarrow(1)$.
\end{proof}

Theorem~\ref{t4} implies:

\begin{corollary} Each dense $G_\delta$-subset of a dense tame $G_\delta$-set in a manifold is tame.
\end{corollary}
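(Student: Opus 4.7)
My plan is to derive the corollary from Theorem~\ref{t4} applied twice, using the trivial monotonicity of $\sigma\Z_0$-universality. Let $T$ be a dense tame $G_\delta$-set in a manifold $X$ and let $H\subset T$ be a dense $G_\delta$-subset. First I would check the obvious point that $H$ is itself a dense $G_\delta$-set in $X$: since $T$ is $G_\delta$ in $X$ and $H$ is $G_\delta$ in the subspace $T$, writing $H=T\cap\bigcap_{i\in\w}V_i$ with $V_i$ open in $X$ expresses $H$ as a $G_\delta$-set in $X$; density of $H$ in the dense set $T$ gives density in $X$. Consequently the complement $X\setminus H$ is a meager $F_\sigma$-set in $X$, i.e.\ $X\setminus H\in\sigma\Z_0$.

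Next, the hypothesis that $T$ is a dense tame $G_\delta$-set combined with the equivalence $(3)\Leftrightarrow(1)$ of Theorem~\ref{t4} tells me that its complement $X\setminus T$ is $\sigma\Z_0$-universal in $X$. Since $H\subset T$, we have the inclusion $X\setminus T\subset X\setminus H$, both sets lying in $\sigma\Z_0$.

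The key elementary observation is then: a superset $B\in\sigma\Z_0$ of any $\sigma\Z_0$-universal set $A$ is itself $\sigma\Z_0$-universal. Indeed, for any $K\in\sigma\Z_0$ there is a homeomorphism $h:X\to X$ with $h(K)\subset A$, and then $h(K)\subset B$ as well. Applying this with $A=X\setminus T$ and $B=X\setminus H$ shows that $X\setminus H$ is $\sigma\Z_0$-universal in $X$.

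Finally I invoke the equivalence $(1)\Leftrightarrow(3)$ of Theorem~\ref{t4} in the reverse direction: the $\sigma\Z_0$-universality of $X\setminus H$ yields that its complement $H$ is a dense tame $G_\delta$-set in $X$, which is exactly the conclusion of the corollary. There is no real obstacle here; the content of the corollary is entirely absorbed by Theorem~\ref{t4}, and the only thing to verify is the trivial preservation of $\sigma\Z_0$-universality under enlarging by a $\sigma\Z_0$-set.
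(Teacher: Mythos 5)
Your argument is correct and is exactly the intended deduction from Theorem~\ref{t4}, which the paper leaves implicit: $H$ is a dense $G_\delta$ in $X$, so $X\setminus H$ is a $\sigma Z_0$-set containing the $\sigma\Z_0$-universal set $X\setminus T$, hence is itself $\sigma\Z_0$-universal, and the equivalence $(1)\Leftrightarrow(3)$ gives that $H$ is a dense tame $G_\delta$-set. The monotonicity step you verify is the universal-set counterpart of the paper's earlier remark that any set in $\sigma\K$ containing a $\K$-absorptive set is $\K$-absorptive, so the two routes coincide in substance.
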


We finish this paper by some open problems. It is clear that each tame $G_\delta$-set in a manifold is zero-dimensional. However,  not each zero-dimensional dense $G_\delta$-subset of the Hilbert cube $\II^\w$ is tame.

\begin{proposition} For any dense $G_\delta$-set $G\subset\II$ the countable product $G^\w$ is not a tame $G_\delta$-set in $\II^\w$.
\end{proposition}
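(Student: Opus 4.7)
The plan is to split into two cases depending on whether the interior of $G$ in $\II$ is empty. When $G$ has nonempty interior, $G$ contains a nondegenerate closed interval $J$, so $G^\w$ contains $J^\w$, a topological Hilbert cube, which has infinite topological dimension. Since every tame $G_\delta$-set in a manifold is zero-dimensional (as remarked in the paragraph immediately preceding the proposition), $G^\w$ cannot in this case be a tame $G_\delta$-set.

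When $G$ has empty interior, the set $\II\setminus G$ is dense, $G$ is zero-dimensional (a clopen base being $\{G\cap(a,b):a,b\in\II\setminus G\}$), and consequently $G^\w$ is zero-dimensional as well, so the dimension argument is no longer available. I would argue by contradiction: assume $G^\w=\ulim\mathcal T$ for a tame family $\mathcal T$ of tame open balls (Proposition~\ref{p2}), fix $x\in G^\w$, and choose a descending chain $B_1\supsetneq B_2\supsetneq\cdots$ in $\mathcal T$ through $x$ with $\bar B_{n+1}\subset B_n$ and $\diam B_n\to 0$. The vanishing-family argument already used to show that tame $G_\delta$-sets are zero-dimensional gives $\partial B_n\cap G^\w=\emptyset$, so each face $\partial B_n\cong\II^\w$ lies in
\[
\II^\w\setminus G^\w\;=\;\bigcup_{i\in\w}\pi_i^{-1}(\II\setminus G)\;=\;\bigcup_{(i,k)\in\w\times\w}\pi_i^{-1}(K_k),
\]
where $\pi_i$ is the $i$-th coordinate projection and the $K_k\subset\II$ are closed nowhere-dense (hence zero-dimensional) sets whose union is $\II\setminus G$. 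The Baire Category Theorem applied to the compact Baire space $\partial B_n$ yields indices $i_n,k_n$ for which $\pi_{i_n}^{-1}(K_{k_n})\cap\partial B_n$ has nonempty interior; since continuous maps from a connected open set into a zero-dimensional space are constant, passing to a connected component one obtains a nonempty open $U_n\subset\partial B_n$ whose image under $\pi_{i_n}$ is a single value $c_n\in K_{k_n}\subset\II\setminus G$.

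The crux of the argument, which is also the main obstacle, is converting this local hyperplane structure into a contradiction. The plan is to use the collar homeomorphism $(O(\bar B_n),\bar B_n)\cong(\II^\w\times[0,\infty),\II^\w\times[0,1])$ together with the fact that the coordinate function $\pi_{i_n}$, being linear on $\II^\w$, cannot attain a one-sided extremum at an interior point; this should force $\partial B_n$ to coincide locally near $U_n$ with the coordinate hyperplane $\pi_{i_n}^{-1}(c_n)$ and place $B_n$ locally on one definite side of it. Passing to a subsequence where $i_n=i_0$ is constant (feasible when $(i_n)$ is bounded; the unbounded case needs to be treated separately, exploiting that $\diam B_n\to 0$ controls only the early coordinates, so the tail-coordinate case can be ruled out by a parallel Baire argument on a cofinal subsequence of tail indices) yields $c_n\to x_{i_0}\in G$ with $c_n\in\II\setminus G$. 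Combined with the nested shrinking structure $\bar B_{n+1}\subset B_n$, the balls $B_n$ would then have to sit simultaneously on a fixed side of hyperplanes whose values converge to the $G$-value $x_{i_0}$ from the $\II\setminus G$-complement, contradicting the fact that $x\in B_n$ for all $n$.

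The hardest step is this local-to-global coordination in the empty-interior case: extracting from the Baire-category analysis of a single face $\partial B_n$ a rigid hyperplane structure, and then showing that the shrinking nested chain of tame balls together with $x\in\bigcap B_n\subset G^\w$ cannot be reconciled with the accumulation of $\II\setminus G$-valued coordinate hyperplanes $\pi_{i_0}^{-1}(c_n)$ at the $G$-point $x_{i_0}$.
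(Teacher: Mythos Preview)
Your Case~1 is correct (and the paper does not even separate it out: since $G$ is dense, $\II\setminus G$ is totally disconnected regardless, and the paper's argument is uniform). The difficulty, as you recognize, is Case~2, and there your argument has a genuine gap.

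Up through the Baire step you are fine, and in fact your intuition that $B_n$ sits locally on one side of the hyperplane $\pi_{i_n}^{-1}(c_n)$ near $U_n$ can be justified: in a small connected neighborhood $V$ of a point of $U_n$ with $V\cap\partial B_n\subset U_n$, each component of $V\setminus\pi_{i_n}^{-1}(c_n)$ misses $\partial B_n$, so $B_n$ is clopen there; the collar structure then forces exactly one side into $B_n$ and the other into $\II^\w\setminus\bar B_n$. But this is where your argument stops being an argument and becomes a hope. The one-sidedness is \emph{local}, near some point $p_n\in\partial B_n$ that you did not choose; it says nothing about where $x$ sits relative to the hyperplane, and nothing forces the side to be ``fixed'' along your subsequence. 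Even granting $i_n\equiv i_0$ and $c_n\to x_{i_0}$, the statements ``$x_{i_0}>c_n$ for all $n$'' or ``$x_{i_0}<c_n$ for all $n$'' are perfectly consistent with $c_n\in\II\setminus G$ and $x_{i_0}\in G$, so no contradiction emerges. Your treatment of the unbounded-$(i_n)$ case is only a promissory note. In short, the nested-chain-plus-limit strategy does not close.

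The paper's proof avoids this entirely by working only at the \emph{first} level: it takes the dense tame open set $T\supset G^\w$ with $\C(T)$ the family of top-level tame balls. The crucial external input is Theorem~1.4 of \cite{BR}: the complement $S=\II^\w\setminus T$ is itself a Hilbert cube, and each $\bar B\setminus B$ with $B\in\C(T)$ is a $Z_\w$-set in $S$. Baire on $S$ (not on an individual $\partial B_n$) gives a connected relatively open $W\subset S$ contained in a single hyperplane $\pr_n^{-1}(t)$; because the union of all the ball-boundaries is only a $\sigma Z_\w$-set in $S$, one can pick $x_0\in W$ lying on \emph{no} ball boundary. Now take a small neighborhood $U$ of $x_0$ in $\II^\w$ with $U\cap S\subset W$ and with $U\setminus\pr_n^{-1}(t)$ having at most two components. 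The key is a pigeonhole, not a limit: pick three pairwise distinct balls $B_1,B_2,B_3\in\C(T)$ with $\bar B_i\subset U$ (possible since $\C(T)$ is vanishing and $T$ is dense). Since $\partial B_i\subset U\cap S\subset\pr_n^{-1}(t)$, each $B_i$ is clopen in every component of $U\setminus\pr_n^{-1}(t)$; two of the three must meet the same component $V$, forcing $V\subset B_i$ and $V\subset B_j$ for distinct $i,j$, contradicting disjointness. The two ideas you are missing are (i) applying Baire to all of $S$ and stepping off the $\sigma Z_\w$-set of ball boundaries, so that \emph{every} nearby ball has its \emph{entire} boundary in the single hyperplane, and (ii) replacing the limiting argument by a finite pigeonhole among three disjoint balls at the same level.
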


\begin{proof} Assuming that $G^\w$ is tame, we can find a dense tame open set $T\subset\II^\w$ containing $G^\w$.  By Theorem~1.4 of \cite{BR}, the complement $S=\II^\w\setminus T$ is homeomorphic to the Hilbert cube and the boundary $\bar B\setminus B$ of each  tame open ball $B\in\C(T)$ in $\II^\w$ is a $Z_\w$-set in $S$. Let $\pr_n:\II^\w\to\II$, $n\in\w$, denote the projection of the Hilbert cube $\II^\w$ onto the $n$th coordinate. Since $\II^\w\setminus T\subset\bigcup_{n\in\w}\pr_n^{-1}(\II\setminus G)$, Baire Theorem yields a non-empty open subset $W\subset S$ such that $W\subset\pr_n^{-1}(\II\setminus G)$ for some $n\in\w$. Since $S$ is homeomorphic to the Hilbert cube, we can assume that the set $W$  is connected and hence is contained in $\pr_n^{-1}(t)$ for some point $t\in\II\setminus G$. Since the union $\Delta=\bigcup_{B\in\C(U)}\bar B\setminus B$ is a $\sigma\kern-1pt Z_\w$-set in $S$, we can chose a point $x_0\in W\setminus \Delta$. Choose an open neighborhood $U$ of $x_0$ in $\II^\w$ such that $U\cap S\subset W$ and $U\setminus \pr_n^{-1}(t)$ has at most two connected components.

Since the family $\C(T)$ is vanishing and $T=\bigcup\C(T)$ is dense in $\II^\w$, there are three pairwise distinct tame open balls $B_1,B_2,B_3\in\C(T)$ such that $\bar B_1\cup\bar B_2\cup \bar B_2\subset U$.
Since the set $U\setminus \pr_n^{-1}(t)$ has at most two connected components, there are two distinct indices $1\le i,j\le 3$ such that the balls $B_i$ and $B_j$ meet the same connected component $V$ of $U\setminus\pr_n^{-1}(t)$. Since $\bar B_i\setminus B_i\subset U\cap S\subset \pr_n^{-1}(t)$, the set $V\cap B_i$ is closed-and-open in the connected set $V$ and hence coincides with $V$. So, $V\subset B_i$.
By the same reason, $V\subset B_j$, which is not possible as the balls $B_i$ and $B_j$ are disjoint.
\end{proof}

\begin{problem} Can the countable power $G^\w$ of a dense $G_\delta$-set $G\subset\II$ be covered by countably many dense tame $G_\delta$-sets?
\end{problem}

By Smirnov's result \cite[5.2.B]{En}, the Hilbert cube $\II^\w$ can be covered by $\aleph_1$  zero-dimensional $G_\delta$-sets.

\begin{problem} What is the smallest cardinality of a cover of the Hilbert cube $\II^\w$ by  tame $G_\delta$-sets? Is it equal to $\aleph_1$? {\rm (By Theorem 1.6 of \cite{BMRZ} this cardinality does not exceed $\mathrm{add}(\mathcal M)$, the additivity of the ideal $\mathcal M$ of meager subsets on the real line.)}
\end{problem}

\end{document}